\documentclass{amsart}
\usepackage{amsmath,amssymb,amsthm,amsfonts,amscd,mathrsfs,mathtools,enumerate}
\usepackage{tikz,graphicx}
\usepackage[all]{xy}
\usepackage[hyphens]{url}
\usepackage{hyperref}
\usepackage{caption}
\usetikzlibrary{decorations.markings}

\theoremstyle{plain}
    \newtheorem{thm}{Theorem}[section]
    \newtheorem{lem}[thm]   {Lemma}

    \newtheorem*{thm*}{Theorem}

\theoremstyle{definition}
    \newtheorem{defn}[thm]  {Definition}

    \newtheorem{rems}[thm]{Remarks}

%%%%%%% Oprea macros %%%%%%%%%
%\def\cat1{\mathrm{cat}_1}
%\def\cat_m{\mathrm{cat}_m}

%%%%%%%%%%%%%%%%%%%%%%

\newcommand{\be}{\begin{enumerate}}
\newcommand{\ee}{\end{enumerate}}
\newcommand{\bi}{\begin{itemize}}
\newcommand{\ei}{\end{itemize}}
\newcommand{\R}{\mathbb{R}}

\newcommand{\TC}{{\sf TC}}
\newcommand{\dTC}{\overrightarrow{{\sf TC}}}
\newcommand{\dPX}{\overrightarrow{P}X}
\newcommand{\dsn}{\overrightarrow{S^n}}

\begin{document}

\title[Directed TC of spheres]{Directed topological complexity of spheres}

\author{Ay\c{s}e Borat}

\author{Mark Grant}

\address{Bursa Technical University,
Faculty of Engineering and Natural Sciences,
Department of Mathematics,
Yildirim/Bursa,
Turkey}

\address{Institute of Mathematics,
Fraser Noble Building,
University of Aberdeen,
Aberdeen AB24 3UE,
UK}

\email{ayse.borat@btu.edu.tr}

\email{mark.grant@abdn.ac.uk}

\date{\today}

\keywords{directed topological complexity, directed homotopy, directed spheres}
\subjclass[2010]{55S40 (Primary); 54F05, 68T40, 70Q05 (Secondary).}

\begin{abstract}
We show that the directed topological complexity (as defined by E.\ Goubault \cite{Goubault}) of the directed $n$-sphere is $2$, for all $n\ge1$.
\end{abstract}

%\thanks{}

\maketitle
\section{Introduction}\label{sec:intro}

Topological complexity is a numerical homotopy invariant, defined by Michael Farber \cite{Far03,Far04} as part of his topological study of the motion planning problem from robotics. Given a path-connected space $X$, let $PX$ denote the space of all paths in $X$ endowed with the compact open topology, and let $\pi:PX\to X\times X$ denote the endpoint fibration given by $\pi(\gamma)=(\gamma(0),\gamma(1))$. Viewing $X$ as the configuration space of some mechanical system, one defines a \emph{motion planner} on a subset $A\subseteq X\times X$ to be a local section of $\pi$ on $A$, that is, a continuous map $\sigma: A\to PX$ such that $\pi\circ \sigma$ equals the inclusion of $A$ into $X\times X$. Assuming $X$ is an Euclidean Neighbourhood Retract (ENR), the topological complexity of $X$, denoted $\TC(X)$, is defined to be the smallest natural number $k$ such that $X\times X$ admits a partition into $k$ disjoint ENRs, each of which admits a motion planner.

Many basic properties of this invariant were established in the papers \cite{Far03,Far04}, which continue to inspire a great deal of research by homotopy theorists (a snapshot of the current state-of-the-art can be found in the conference proceedings volume \cite{GLV}). Here we simply mention that the topological complexity of spheres was calculated in \cite{Far03}; it is given by
\[
\TC(S^n) = \begin{cases} 2 & \mbox{if $n$ is odd,} \\ 3 & \mbox{if $n$ is even.} \end{cases}
\]

In the recent preprint \cite{Goubault}, Eric Goubault defined a variant of topological complexity for directed spaces. Recall that a directed space, or $d$-space, is a space $X$ together with a distinguished class of paths in $X$ called directed paths, satisfying certain axioms (full definitions will be given in Section 2). Partially ordered spaces give examples of $d$-spaces. The directed paths of a $d$-space form a subspace $\overrightarrow{P}X$ of $PX$. The endpoint fibration restricts to a map $\chi:\overrightarrow{P}X\to X\times X$, which is not surjective in general. Its image, denoted $\Gamma_X\subseteq X\times X$, is the set of $(x,y)\in X\times X$ such that there exists a directed path from $x$ to $y$. A \emph{directed motion planner} on a subset $A\subseteq \Gamma_X$ is defined to be a local section of $\chi$ on $A$. The directed topological complexity of the $d$-space $X$, denoted $\dTC(X)$, is the smallest natural number $k$ such that $\Gamma_X$ admits a partition into $k$ disjoint ENRs, each of which admits a directed motion planner.

As remarked in the introduction to \cite{Goubault}, the directed topological complexity seems more suited to studying the motion planning problem in the presence of control constraints on the movements of the various parts of the system. It was shown in \cite{Goubault} to be invariant under a suitable notion of directed homotopy equivalence, and a few simple examples were discussed. The article \cite{GFS} in the present volume provides further examples, and proves several properties of directed topological complexity, including a product formula. It remains to find general upper and lower bounds for this invariant, and to give further computations for familiar $d$-spaces.

The contribution of this short note is to compute the directed topological complexity of directed spheres. For each $n\ge1$ the directed sphere $\overrightarrow{S^n}$ is the directed space whose underlying topological space is the boundary $\partial I^{n+1}$ of the $(n+1)$-dimensional unit cube, and whose directed paths are those paths which are non-decreasing in every coordinate.

\begin{thm*}
The directed topological complexity of directed spheres is given by
\[
\dTC(\overrightarrow{S^n})=2\mbox{ for all $n\ge1$.}
\]
\end{thm*}

This theorem will be proved in Section 3 below by exhibiting a partition of $\Gamma_{\overrightarrow{S^n}}$ into $2$ disjoint ENRs with explicit motion planners.

The first author wishes to thank the University of Aberdeen for their hospitality during her stay at the Institute of Mathematics, where this work was carried out. Both authors wish to thank Eric Goubault for useful discussions and for sharing with them preliminary versions of his results, and the anonymous referees for valuable comments.

\section{Preliminaries}

\begin{defn}[{M.\ Grandis, \cite{Grandis}}]
A \emph{directed space} or \emph{$d$-space} is a pair $(X,\overrightarrow{P}X)$ consisting of a topological space $X$ and a subspace $\overrightarrow{P}X\subseteq PX$ of the path space of $X$ satisfying the following axioms:
\begin{itemize}
\item constant paths are in $\overrightarrow{P}X$;
\item $\overrightarrow{P}X$ is closed under pre-composition with non-decreasing continuous maps $r:[0,1]\to [0,1]$;
\item $\overrightarrow{P}X$ is closed under concatenation.
\end{itemize}
The paths in $\overrightarrow{P}X$ are called \emph{directed paths} or \emph{dipaths}, and the space $\dPX$ is called the \emph{dipath space}.
\end{defn}
Examples of $d$-spaces include partially ordered spaces (where dipaths in $\overrightarrow{P}X$ consist of continuous order-preserving maps $\gamma:([0,1],\leq)\to (X,\leq)$) and cubical sets. We can also view any topological space $X$ as a $d$-space by taking $\dPX=PX$. The dipath space $\dPX$ is usually omitted from the notation for a $d$-space $(X,\dPX)$.

\begin{defn}[{\cite{Goubault}}]
Given a $d$-space $X$, let
\[
\Gamma_X=\{(x,y)\in X\times X\mid \exists\gamma\in \dPX\mbox{ such that }\gamma(0)=x,\gamma(1)=y\}\subseteq X\times X.
\]
The \emph{dipath space map} is given by
\[
\chi:\dPX\to \Gamma_X,\qquad\chi(\gamma) = \big(\gamma(0),\gamma(1)\big).
\]
\end{defn}
That is, the dipath space map is obtained from the classical endpoint fibration $\pi:PX\to X\times X$ by restriction of domain and codomain.

\begin{defn}[{\cite{Goubault}}]
Given a $d$-space $X$, its \emph{directed topological complexity}, denoted $\dTC(X)$, is defined to be the smallest natural number $k$ such that there exists a partition $\Gamma_X=A_1\sqcup\cdots \sqcup A_k$ into disjoint ENRs, each of which admits a continuous map $\sigma_i:A_i\to \dPX$ such that $\chi\circ\sigma_i = {\rm incl}:A_i\hookrightarrow \Gamma_X$.
\end{defn}

\begin{rems}
The dipath space map is not a fibration, in general. One can easily imagine directed spaces $X$ for which the homotopy type of the fibre $\dPX(x,y)$ is not constant on the path components of $\Gamma_X$. Related to this is the fact that, unlike in the classical case of $\TC(X)$, the above definition does not coincide with the alternative definition using open (or closed) covers. Both of these remarks are due to E.\ Goubault.

Note that we are using the unreduced version of $\dTC$, as in the article \cite{Goubault}.
\end{rems}

A notion of \emph{dihomotopy equivalence} was defined in \cite[Definition 3]{Goubault}, and it was shown in \cite[Lemma 6]{Goubault} that if $X$ and $Y$ are dihomotopy equivalent $d$-spaces then $\dTC(X)=\dTC(Y)$. Furthermore, a notion of \emph{dicontractibility} for $d$-spaces was outlined in \cite[Definition 4]{Goubault}, and \cite[Theorem 1]{Goubault} asserts that a $d$-space $X$ that is contractible in the classical sense has $\dTC(X)=1$ if and only if $X$ is dicontractible (see also \cite[Theorem 1]{GFS}). Here we will only require the following weaker assertion. Let us call a $d$-space $(X,\dPX)$ \emph{loop-free} if for all $x\in X$, the fibre $\dPX(x,x)$ consists only of the constant path at $x$.

\begin{lem}\label{lemma}
Let $X$ be a loop-free $d$-space for which $\dTC(X)=1$. Then for all $(x,y)\in \Gamma_X$, the corresponding fibre $\dPX(x,y)$ of the dipath space map is contractible.
\end{lem}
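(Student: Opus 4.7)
The plan is to exploit the global section guaranteed by $\dTC(X)=1$ to construct an explicit contracting homotopy on the fibre. Since $\dTC(X)=1$ means $\Gamma_X$ itself admits a continuous section $\sigma\colon \Gamma_X\to\dPX$ of $\chi$, we have, for every pair $(a,b)\in\Gamma_X$, a preferred dipath $\sigma(a,b)$ from $a$ to $b$ which depends continuously on $(a,b)$. Fix $(x,y)\in\Gamma_X$; the goal is to contract the fibre $\dPX(x,y)=\chi^{-1}(x,y)$ to the point $\sigma(x,y)$ by gradually replacing each dipath $\gamma$ from $x$ to $y$ with the section path, sweeping the replacement backwards from the endpoint $y$.

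Concretely, I would define $H\colon \dPX(x,y)\times[0,1]\to\dPX(x,y)$ by
\[
H(\gamma,s)(t)=\begin{cases}\gamma(t), & 0\le t\le 1-s,\\ \sigma\bigl(\gamma(1-s),y\bigr)\!\left(\tfrac{t-(1-s)}{s}\right), & 1-s\le t\le 1.\end{cases}
\]
The two cases agree at $t=1-s$, so the assignment is well-defined; moreover $H(\gamma,s)$ is a dipath from $x$ to $y$ because it is a (reparametrized) concatenation of the dipath $t\mapsto\gamma((1-s)t)$ with the dipath $\sigma(\gamma(1-s),y)$, and $\dPX$ is closed under reparametrization by non-decreasing maps and under concatenation. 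At $s=1$ the first clause collapses to $\{0\}$ and we obtain $H(\gamma,1)=\sigma(x,y)$, which is independent of $\gamma$; at $s=0$ the second clause collapses to $\{1\}$ and we obtain $H(\gamma,0)=\gamma$. Hence $H$ is a homotopy from the identity of $\dPX(x,y)$ to the constant map at $\sigma(x,y)$, which gives contractibility.

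The main obstacle is checking continuity of $H$, in particular near $s=0$, and this is precisely where the loop-free hypothesis enters. Using the exponential law it suffices to show that the adjoint $(\gamma,s,t)\mapsto H(\gamma,s)(t)$ is continuous as a map $\dPX(x,y)\times[0,1]\times[0,1]\to X$. On the interior of each of the two regions this follows from joint continuity of path evaluation and from continuity of the section $\sigma$; the issue is behaviour as $s\to 0^+$, when the second region shrinks to the single point $t=1$ but the argument $\tfrac{t-(1-s)}{s}$ can take any value in $[0,1]$. As $s\to 0^+$ we have $\gamma(1-s)\to y$, so by continuity of $\sigma$ the dipath $\sigma(\gamma(1-s),y)$ tends to $\sigma(y,y)$; the loop-free assumption forces $\sigma(y,y)$ to be the constant dipath at $y$, so this entire second piece collapses uniformly to $y$, matching $\gamma(1)=y$. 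Once this boundary behaviour is verified the rest of the continuity check is routine, completing the construction of the contracting homotopy and hence the proof.
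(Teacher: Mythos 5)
Your proof is correct and follows essentially the same approach as the paper: use the global section $\sigma$ guaranteed by $\dTC(X)=1$ to build an explicit contracting homotopy on the fibre, with loop-freeness invoked precisely to guarantee continuity at the parameter value where the $\sigma$-piece collapses to a point. The only cosmetic difference is that you sweep the replacement in from the endpoint $y$ only, whereas the paper's homotopy sweeps symmetrically from both endpoints, replacing the middle portion $[\tfrac{t}{2},1-\tfrac{t}{2}]$ of $\gamma$ by a reparametrization of $\sigma\bigl(\gamma(\tfrac{t}{2}),\gamma(1-\tfrac{t}{2})\bigr)$; both formulae work for the same reason.
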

\begin{proof}
We reproduce the relevant part of the proof of \cite[Theorem 1]{Goubault} (with thanks to the anonymous referees, who pointed out the necessity for an additional assumption such as loop-freeness to ensure continuity of the homotopy below). Suppose $\dTC(X)=1$, and let $\sigma:\Gamma_X\to \dPX$ be a global section of $\chi:\dPX\to \Gamma_X$. Given $(x,y)\in \Gamma_X$, let $f:\{\sigma(x,y)\}\to \dPX(x,y)$ and $g:\dPX(x,y)\to\{\sigma(x,y)\}$ denote the inclusion and constant maps, respectively. Clearly $g\circ f={\rm Id}_{\{\sigma(x,y)\}}$, so to prove the lemma it suffices to give a homotopy $H:\dPX(x,y)\times I\to \dPX(x,y)$ from $f\circ g$ to ${\rm Id}_{\dPX(x,y)}$. Such a homotopy is defined explicitly by setting
\[
H(\gamma,t)(s) = \begin{cases} \gamma(s) & \mbox{for }0\le s\le \frac{t}{2} \\ \sigma\left(\gamma\left(\frac{t}{2}\right),\gamma\left(1-\frac{t}{2}\right)\right)\left(\frac{s-\frac{t}{2}}{1-t}\right) & \mbox{for }\frac{t}{2} \le s \le 1-\frac{t}{2} \\
\gamma(s) & \mbox{for }1-\frac{t}{2} \le s \le 1 \end{cases}
\]
for all $t,s\in [0,1]$. The assumption that $X$ is loop-free ensures that $\sigma(x,x)$ is constant at $x$ for all $x\in X$, which guarantees continuity at $t=1$.
\end{proof}

\section{Directed topological complexity of directed spheres}

\begin{defn} Let $n\geq 1$ be a natural number. The \emph{directed $n$-sphere}, denoted $\overrightarrow{S^n}$, is the $d$-space whose underlying space is the boundary $\partial I^{n+1}$ of the unit cube $I^{n+1}=[0,1]^{n+1}\subseteq \R^{n+1}$, and whose dipaths are those paths which are non-decreasing in each coordinate.
\end{defn}

We now fix some notation, most of which is borrowed from \cite[Section 6]{FRGH}. A point $\mathbf{x}=(x_0,\ldots , x_n)\in\R^{n+1}$ will be denoted $\mathbf{x}=x_0\cdots x_n$ for brevity. We use $-$ to denote an arbitrary element of $(0,1)$. Therefore we may indicate an arbitrary point in $\partial I^{n+1}$ by a string $x_0\cdots x_n$ where each $x_i\in \{0,-,1\}$, and at least one $x_i\in \{0,1\}$. We let $[n]=\{0,1,\ldots , n\}$ denote the set which indexes the coordinates.

For example, $--0$ denotes an arbitrary point in the interior of the bottom ($z=0$) face of $\partial I^3$, while $-11$ denotes a point on the interior of the top ($z=1$), back ($y=1$) edge. The point $---$ is not in $\partial I^3$.

With these notations, if $(x_0\cdots x_n,y_0\cdots y_n)\in \Gamma_{\dsn}$ then $x_i\leq y_i$ for all $i\in [n]$, but the converse does not hold. For example, any pair of the form $(0--,1--)$ is not in $\Gamma_{\overrightarrow{S^2}}$.

We are now ready to prove our main result, restated here for convenience.

\begin{thm*}
The directed topological complexity of directed spheres is given by
\[
\dTC(\overrightarrow{S^n})=2\mbox{ for all $n\ge1$.}
\]
\end{thm*}

\begin{proof}
We first show that $\dTC(\dsn)>1$. Since $\dsn$ is loop-free, it suffices by Lemma \ref{lemma} to find $(\mathbf{x},\mathbf{y})\in \Gamma_{\dsn}$ such that $\overrightarrow{P}\overrightarrow{S^n}(\mathbf{x},\mathbf{y})$ is not contractible. Fix $x_2,\ldots, x_n\in (0,1)$. It is clear that
\[
\overrightarrow{P}\overrightarrow{S^n}(00x_2\cdots x_n,11x_2\cdots x_n)\cong\overrightarrow{P}\overrightarrow{S^1}(00,11),
\]
and that the latter space is disconnected (since not all dipaths from $00$ to $11$ are dihomotopic), see Figure \ref{TC1}. In particular, $\overrightarrow{P}\overrightarrow{S^n}(00x_2\cdots x_n,11x_2\cdots x_n)$ is not contractible, hence $\dTC(\dsn)>1$.

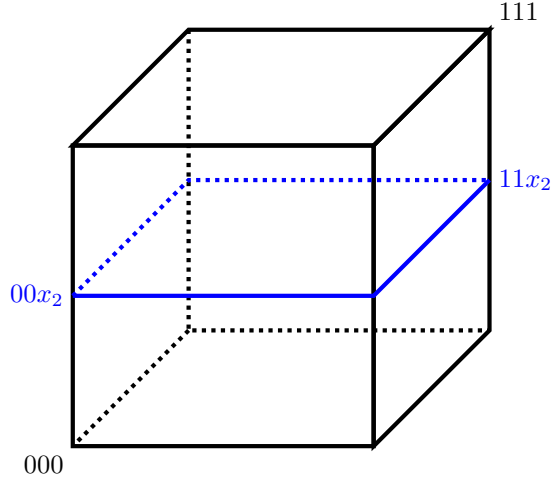
\begin{figure}
\begin{tikzpicture}
\pgfmathsetmacro{\cubex}{4}
\pgfmathsetmacro{\cubey}{4}
\pgfmathsetmacro{\cubez}{4}
\draw[ultra thick] (0,0,0) -- ++(-\cubex,0,0) -- ++(0,-\cubey,0) -- ++(\cubex,0,0) -- cycle;
\draw[ultra thick] (0,0,0) -- ++(0,0,-\cubez) -- ++(0,-\cubey,0) -- ++(0,0,\cubez) -- cycle;
\draw[ultra thick] (0,0,0) -- ++(-\cubex,0,0) -- ++(0,0,-\cubez) -- ++(\cubex,0,0) -- cycle;
\draw[ultra thick,dotted] (-\cubex,0,-\cubez) -- ++(0,-\cubey,0) -- ++(\cubex,0,0);
\draw[ultra thick,dotted] (-\cubex,-\cubey,-\cubez) -- ++(0,0,\cubez);
\draw[ultra thick,dotted,blue] (-\cubex,-.5*\cubey,0) -- ++(0,0,-\cubez) -- ++(\cubex,0,0);
\draw[ultra thick,blue] (-\cubex,-.5*\cubey,0) -- ++(\cubex,0,0) -- ++(0,0,-\cubez);
\node[below left] at (-\cubex,-\cubey,0){$000$};
\node[above right] at (0,0,-\cubez){$111$};
\node[left,blue] at (-\cubex,-.5*\cubey,0){$00x_2$};
\node[right,blue] at (0,-.5*\cubey,-\cubez){$11x_2$};
\end{tikzpicture}
\caption{Directed paths in $\protect\overrightarrow{S^2}$ from $00x_2$ to $11x_2$ must remain in the blue square, illustrating the homeomorphism $\protect\overrightarrow{P}\protect\overrightarrow{S^2}(00x_2,11x_2)\cong \protect\overrightarrow{P}\protect\overrightarrow{S^1}(00,11)$.}
\label{TC1}
\end{figure}

A more direct proof that $\dTC(\dsn)>1$, avoiding the use of Lemma \ref{lemma}, may be given as follows. Given $x,t\in (0,1)$, consider points $(\mathbf{x}_1,\mathbf{y}_1)=(t0x\cdots x,11x\cdots x)$ and $(\mathbf{x}_2,\mathbf{y}_2)=(0tx\cdots x,11x\cdots x)$, both in $\Gamma_{\dsn}$. A $d$-path from $\mathbf{x}_1$ to $\mathbf{y}_1$ is contained in the half-square $([0, 1]\times\{0\})\cup (\{1\}\times[0,1])\times\{x\}^{n-1}$, while a $d$-path from $\mathbf{x}_2$ to $\mathbf{y}_2$ is contained in the half-square $(\{0\}\times [0, 1])\cup ([0,1]\times\{1\})\times\{x\}^{n-1}$ (see Figure \ref{TC1} for the case $n = 2$, where the two half-squares are depicted in solid blue and dotted blue respectively). Assuming the existence of a continuous section for the dipath space map on all of $\Gamma_{\dsn}$ and letting $t$ tend to $0$ results in two disagreeing dipaths from $00x\cdots x$ to $11x\cdots x$, yielding a contradiction. (We are grateful to an anonymous referee for providing this argument.)

To prove that $\dTC(\dsn)\le2$, we will exhibit a partition $\Gamma_{\dsn}=A_1\sqcup A_2$ into two disjoint ENRs, each equipped with a continuous directed motion planner $\sigma_i:A_i\to \overrightarrow{P}(\dsn)$. %The sets $A_i$ will be defined as the domain of definition of the motion planners $\sigma_i$.

Consider the $d$-space $\overrightarrow{\R^{n+1}}$, where the dipaths are non-decreasing in each coordinate. Here we have $\dTC(\overrightarrow{\R^{n+1}})=1$, for we can describe a directed motion planner $\widetilde{\sigma_1}$ on
\[
\Gamma_{\overrightarrow{\R^{n+1}}}=\{(x_0\cdots x_n,y_0\cdots y_n) \mid x_i\leq y_i\mbox{ for all }i\in[n]\}
\]
by first increasing $x_0$ to $y_0$, then increasing $x_1$ to $y_1$, and so on, finally increasing $x_n$ to $y_n$. It is not difficult to write a formula for $\widetilde{\sigma_1}$, and check that is it continuous. Similarly, we can define a second motion planner $\widetilde{\sigma_2}$ which first increases $x_n$ to $y_n$, then increases $x_{n-1}$ to $y_{n-1}$, and so on, finally increasing $x_0$ to $y_0$.

For $i=1,2$, let $B_i$ be the set of pairs $(\mathbf{x},\mathbf{y})$ in $\Gamma_{\dsn}\subseteq \Gamma_{\overrightarrow{\R^{n+1}}}$ such that the path $\widetilde{\sigma_i}(\mathbf{x},\mathbf{y})$ has image contained in $\partial I^{n+1}$. The restriction $\widetilde{\sigma_i}|_{B_i}:B_i\to \overrightarrow{P}(\dsn)$ is clearly continuous, and is a directed motion planner on $B_i$.

We will show that $B_1\cup B_2=\Gamma_{\dsn}$, and that both $B_1$ and its complement $U_1:=\Gamma_{\dsn}\setminus B_1$ are ENRs. Hence we may set $A_1=B_1$ and $A_2=U_1\subseteq B_2$ to obtain a cover by disjoint ENRs with motion planners $\sigma_i=\widetilde{\sigma_i}|_{A_i}$, as required.

The sets $B_1$ and $B_2$ are best understood in terms of their complements $U_1$ and $U_2:=\Gamma_{\dsn}\setminus B_2$, and in fact we will show that $U_1\cap U_2=\varnothing$.

Observe that $U_1$ is the set of pairs $(\mathbf{x},\mathbf{y})\in \Gamma_{\dsn}$ such that $\widetilde{\sigma_1}(\mathbf{x},\mathbf{y})$ enters the interior of the cube, and this can happen upon increasing any of the $n+1$ coordinates. Thus an element $(x_0\cdots x_n,y_0\cdots y_n)\in U_1$ satisfies the condition:
\begin{enumerate}
\item[$(U_1)$] There exists $j\in [n]$ such that:
\bi
\item $x_j<y_j$;
\item $y_i=-$ for all $i\in [n]$ with $i<j$;
\item $x_i=-$ for all $i\in [n]$ with $i>j$.
\ei
\end{enumerate}
Similarly, an element $(x_0\cdots x_n, y_0\cdots y_n)\in U_2$ satisfies the condition:
\begin{enumerate}
\item[$(U_2)$] There exists $k\in [n]$ such that:
\bi
\item $x_k<y_k$;
\item $x_i=-$ for all $i\in [n]$ with $i<k$;
\item $y_i=-$ for all $i\in [n]$ with $i>k$.
\ei
\end{enumerate}
Now suppose there is an element $(\mathbf{x},\mathbf{y})=(x_0\cdots x_n,y_0\cdots y_n)\in U_1\cap U_2$.

Then  there exist $j,k\in [n]$ such that
\begin{align*}
(\mathbf{x},\mathbf{y}) & = (x_0\cdots x_j-\cdots -,-\cdots - y_j\cdots y_n) \\
                        & = (-\cdots - x_k\cdots x_n,y_0\cdots y_k-\cdots - ).
\end{align*}
We observe that if $j<k$ then $\mathbf{x}=-\cdots -$, while if $j>k$ then $\mathbf{y}=-\cdots -$. Both of these give a contradiction, since $\mathbf{x}$ and $\mathbf{y}$ cannot be in the interior of the cube. Hence we must have $j=k$, and $(\mathbf{x},\mathbf{y})=(-\cdots -x_j-\cdots-,-\cdots -y_j-\cdots -)$ where $x_j<y_j$. Since $\mathbf{x}$ and $\mathbf{y}$ are in $\partial I^{n+1}$, we must have $x_j=0<1=y_j$. This gives $(\mathbf{x},\mathbf{y})=(-\cdots - 0-\cdots -,-\cdots - 1 -\cdots -)$ which is not in $\Gamma_{\dsn}$, a contradiction.

Thus $U_1\cap U_2=\varnothing$, and $B_1\cup B_2=\Gamma_{\dsn}$. It only remains to observe that both $B_1$ and $U_1$ are semi-algebraic subsets of $\R^{2n+2}$ (they are the solution sets of a finite number of linear equalities and inequalities) hence are ENRs.
\end{proof}

\end{document}